\newtheorem{thm}{Theorem}[section]
\newtheorem{cor}[thm]{Corollary}
\newtheorem{prop}[thm]{Proposition}
\theoremstyle{remark}
\newtheorem{rmk}[thm]{Remark}
\theoremstyle{definition}
\title{The $L^{\frac{3}{2}}$-norm of the scalar curvature under the Ricci flow on a $3$-manifold}
\author{Hongnian Huang}
\date{}
\begin{document}
\maketitle
\begin{abstract}
Assume $M$ is a closed 3-manifold whose universal covering is not $S^3$. We show that the obstruction to extend the Ricci flow is the boundedness $L^{\frac{3}{2}}$-norm of the scalar curvature $R(t)$, i.e, the Ricci flow can be extended over finite time $T$ if and only if the $||R(t)||_{L^{\frac{3}{2}}}$ is uniformly bounded for $0 \leq t < T$ . On the other hand, if the fundamental group of $M$ is finite and the $||R(t)||_{L^{\frac{3}{2}}}$ is bounded for all time under the Ricci flow, then $M$ is diffeomorphic to a 3-dimensional spherical space-form. \\
\end{abstract}

\section{Introduction}
In recent years, there has been an increasing interest to understand the obstructions to extending the Ricci flow. For instance, N. Sesum\ \cite{Se} has shown that if the Riemannian curvature blows up at finite time $T$, then the Ricci curvature will also blow up at time $T$. The conjecture made by X. Chen is that the Ricci flow can be extended over time $T$ if and only if the scalar curvature $R(t)$ is uniformly bounded  at $[0, T)$. B. Wang\ \cite{Wa} proved that if the Ricci curvature has an uniformly lower bound in $[0,T)$ and $||R||_{\alpha, M \times [0,T)} < \infty, \alpha \geq \frac{n+2}{2}$, then the flow can be extended over time $T$. In this note, we improve the latter result when $M$ is a 3-dimensional manifold by proving the following:

\begin{thm}
\label{Th}
Assume that the unnormalized Ricci flow $g(t)$ on a closed 3-manifold $M$ blows up at time $T \leq +\infty$, and that the  
$$||R(t)||_{L^{\frac{3}{2}}} < C, \ 0 \leq t < T,$$ 
where $R(t)$ is the scalar curvature of $g(t)$. Then $M$ is diffeomorphic to a 3-dimensional spherical space-form.
\end{thm}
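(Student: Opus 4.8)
The plan is to combine a blow-up (singularity model) analysis with the crucial observation that, in dimension three, the quantity $\int_M |R|^{3/2}\,dV$ is invariant under the parabolic rescaling $g \mapsto \lambda g$: since $R \mapsto \lambda^{-1}R$ and $dV \mapsto \lambda^{3/2}\,dV$, one has $\int_M |\lambda^{-1}R|^{3/2}\,\lambda^{3/2}\,dV = \int_M |R|^{3/2}\,dV$. Thus the hypothesis $\|R(t)\|_{L^{3/2}} < C$ is precisely a bound on a \emph{scale-invariant} quantity, and it is exactly this feature that lets the bound survive a blow-up limit. Scale invariance is the elementary but decisive first step.

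Next I would set up the blow-up. Since $g(t)$ cannot be smoothly continued past $T$, the norm $|Rm|$ is unbounded as $t \to T$; because $M$ is $3$-dimensional the full curvature is controlled by the Ricci tensor, and the Hamilton--Ivey pinching estimate shows that wherever $|Rm|$ is large the scalar curvature $R$ is comparably large, so $\sup_M R(t) \to \infty$. I pick $(x_i,t_i)$, $t_i \to T$, with $Q_i := R(x_i,t_i) \to \infty$ (by a standard point-selection, so that the rescaled curvatures stay uniformly bounded on large parabolic neighborhoods), and dilate by $g_i(t) := Q_i\, g\big(t_i + Q_i^{-1}t\big)$. By Perelman's no-local-collapsing theorem the flow is $\kappa$-noncollapsed at the relevant scale, and Hamilton's compactness theorem together with Hamilton--Ivey pinching yields a subsequential pointed limit $(M_\infty, g_\infty(t), x_\infty)$, $t \in (-\infty,0]$, which is a $3$-dimensional ancient $\kappa$-solution, i.e.\ a complete ancient flow with bounded, nonnegative curvature operator.

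Now I would push the scale-invariant bound through the limit. By the invariance above, $\int_M R_{g_i}^{3/2}(\cdot,0)\,dV_{g_i}(\cdot,0) = \int_M |R(t_i)|^{3/2}\,dV(t_i) \leq C^{3/2}$ for every $i$; since the convergence is in $C^\infty_{loc}$ and the integrand is nonnegative, for every compact $K \subset M_\infty$ we get $\int_K R_\infty^{3/2}\,dV_\infty \leq C^{3/2}$, whence $\int_{M_\infty} R_\infty^{3/2}\,dV_\infty \leq C^{3/2} < \infty$. The point is that no non-compact $3$-dimensional ancient $\kappa$-solution can satisfy this finiteness: for the round cylinder $S^2 \times \mathbb{R}$ (and its $\mathbb{Z}_2$-quotient) the scalar curvature is a positive constant while the volume is infinite, and for the Bryant soliton the asymptotics $R \sim c/s$ together with $\mathrm{Vol}\,B(o,s) \sim c's^2$ give $\int R^{3/2}\,dV \sim \int s^{-1/2}\,ds = \infty$. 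Since by Perelman's structure theory these exhaust the non-compact models, $M_\infty$ must be \emph{compact}. This exclusion of the non-compact models is the heart of the argument, and the step I expect to be the main obstacle: one must verify rigorously that finiteness of the scale-invariant $L^{3/2}$-norm fails for \emph{every} non-compact $3$-dimensional ancient $\kappa$-solution, which requires controlling the decay rate of $R$ against the volume growth using the asymptotic geometry (neck/soliton structure) of such solutions.

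Finally, a compact $3$-dimensional ancient $\kappa$-solution has strictly positive curvature operator (no flat factor can occur, by Hamilton's strong maximum principle and $\kappa$-noncollapsing), so by Hamilton's theorem on closed $3$-manifolds of positive Ricci curvature it is a metric quotient of the round $S^3$, i.e.\ a spherical space-form. Because $M_\infty$ is compact, the pointed Cheeger--Gromov convergence $(M,g_i(\cdot,0)) \to M_\infty$ upgrades to genuine diffeomorphisms for $i$ large; since each $(M,g_i(\cdot,0))$ is just a rescaling of $M$, we conclude that $M$ is diffeomorphic to $M_\infty$, hence to a spherical space-form. The extraction of the ancient $\kappa$-solution is standard Perelman theory and the scale invariance is elementary, so the genuine difficulty is concentrated in the exclusion of the non-compact blow-up limits described above.
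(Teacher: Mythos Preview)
Your proposal follows essentially the same strategy as the paper: exploit the scale invariance of $\|R\|_{L^{3/2}}$ in dimension three, take a blow-up limit to obtain a $3$-dimensional $\kappa$-solution via Perelman's no-local-collapsing and Hamilton's compactness, and then argue that every \emph{noncompact} $\kappa$-solution has infinite $L^{3/2}$-norm of scalar curvature, forcing the limit to be compact and hence a spherical space-form diffeomorphic to $M$.

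The one substantive difference is in how you exclude the noncompact positively curved case. You invoke the explicit asymptotics of the Bryant soliton and assert that ``by Perelman's structure theory these exhaust the non-compact models.'' That attribution is not quite right: Perelman only showed that every point of such a solution lies in an $\epsilon$-neck or a $(C,\epsilon)$-cap; the statement that the Bryant soliton is the \emph{only} noncompact positively curved $\kappa$-solution is Brendle's much later theorem. The paper sidesteps this by using only Perelman's neck/cap structure: a noncompact positively curved $\kappa$-solution contains infinitely many pairwise disjoint $\epsilon$-necks, and on each neck the scale-invariant quantity $\int R^{3/2}\,dV$ is bounded below by a fixed positive constant (it is close to the value on a unit cylinder of length $\epsilon^{-1}$), so the total integral diverges. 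This argument is both more elementary and avoids reliance on the full Brendle classification; it is exactly the ``neck structure'' route you allude to in your hedging sentence, and it is what you should actually carry out. The remaining cylindrical cases ($S^2\times\mathbb{R}$, its orientable $\mathbb{Z}_2$-quotient, and $\mathbb{RP}^2\times\mathbb{R}$) are handled identically in both arguments. A cosmetic difference is that the paper rescales by $Q_i=|Rm(p_i,t_i)|$ rather than $R(x_i,t_i)$; as you note, Hamilton--Ivey pinching makes these equivalent.
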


\begin{rmk}
We say that the Ricci flow $g(t)$ blows up at infinity if
$$
\lim_{t_0 \rightarrow \infty} \max_{x \in M, t \leq t_0} |Rm|(x,t) = \infty.
$$
\end{rmk}

As an immediate corollary, we have

\begin{cor}
Assume $M$ is a closed 3-manifold whose universal covering is not $S^3$, then the Ricci flow can be extended over finite time $T$ if and only if $||R(t)||_{L^{\frac{3}{2}}}$ of $M$ are uniformly bounded for $t < T$.
\end{cor}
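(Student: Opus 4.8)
The plan is to prove the two implications separately, with the nontrivial direction resting entirely on Theorem \ref{Th}. Throughout, fix the maximal solution $g(t)$ of the unnormalized Ricci flow on $[0,T)$ with $T < \infty$, and read ``can be extended over $T$'' as the existence of a smooth prolongation of $g(t)$ to some interval $[0,T']$ with $T' > T$.

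First I would dispose of the easy implication: if the flow extends smoothly over $T$, then $||R(t)||_{L^{\frac{3}{2}}}$ is uniformly bounded for $t < T$. A smooth prolongation gives a solution on a slightly larger interval $[0,T']$, so $g(t)$ together with all of its curvature quantities is smooth, hence bounded, on the compact set $M \times [0,T]$. In particular $\sup_{M \times [0,T)} |R| < \infty$, and since $M$ is closed the volume $\mathrm{Vol}(M,g(t))$ stays finite and continuous up to $t = T$; combining these with H\"older's inequality yields $||R(t)||_{L^{\frac{3}{2}}} \le ||R(t)||_{L^{\infty}}\, \mathrm{Vol}(M,g(t))^{\frac{2}{3}} < C$. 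This direction uses nothing beyond the standard short-time theory.

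For the converse I would argue by contradiction. Assume $||R(t)||_{L^{\frac{3}{2}}} < C$ for $0 \le t < T$ but that the flow cannot be extended past $T$. Because $T < \infty$ is then the maximal existence time, Hamilton's long-time existence criterion forces $\limsup_{t \to T} \max_{x \in M} |Rm|(x,t) = \infty$; that is, $g(t)$ blows up at the finite time $T$ in exactly the sense required by Theorem \ref{Th}. Applying the theorem with the assumed uniform $L^{\frac{3}{2}}$ bound on $R(t)$ shows that $M$ is diffeomorphic to a 3-dimensional spherical space-form $S^3/\Gamma$. But every spherical space-form has universal cover $S^3$, contradicting the standing hypothesis that the universal covering of $M$ is not $S^3$. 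Hence the flow must extend over $T$, completing the corollary.

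The only genuinely delicate point is ensuring that the failure of smooth extension at a \emph{finite} time is equivalent to curvature blow-up in the sense of the remark, so that Theorem \ref{Th} may be invoked; this is precisely Hamilton's criterion that a Ricci flow on a closed manifold persists as long as $|Rm|$ stays bounded. Everything else is bookkeeping: matching the blow-up hypothesis of the theorem and translating its conclusion ``spherical space-form'' into the statement about the universal cover that contradicts the hypothesis on $M$.
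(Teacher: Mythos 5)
Your proposal is correct and is exactly the argument the paper intends: the corollary is stated as ``immediate'' from Theorem \ref{Th} together with Hamilton's criterion (quoted in the paper's preliminaries) that failure of extension at finite $T$ is equivalent to $\lim_{t\to T}\max_M |Rm| = \infty$, plus the trivial converse via boundedness of $|R|$ and the volume on a compact extended interval. Nothing is missing.
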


Another corollary of our theorem is

\begin{cor}
\label{P}
Let $M$ be a closed 3-manifold with finite fundamental group and uniformly bounded $||R(t)||_{L^{\frac{3}{2}}}$ for $t < +\infty$, then $M$ is diffeomorphic to a 3-dimensional spherical space-form.
\end{cor}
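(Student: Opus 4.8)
The plan is to deduce Corollary~\ref{P} directly from Theorem~\ref{Th} by showing that the finiteness of $\pi_1(M)$ forces the Ricci flow to blow up at a finite time, so that the hypothesis of Theorem~\ref{Th} is automatically satisfied. First I would recall the dichotomy for the maximal smooth solution $g(t)$, $t \in [0,T_{\max})$: either $T_{\max} = +\infty$ with uniformly bounded curvature (an immortal, bounded-curvature solution), or $\max_{x \in M}|Rm|(x,t) \to \infty$ as $t \to T_{\max}$, by Hamilton's long-time existence criterion. In the second case the flow blows up at $T = T_{\max} \le +\infty$ in the sense of the Remark, and since $\|R(t)\|_{L^{3/2}}$ is uniformly bounded on $[0,T_{\max})$ by hypothesis, Theorem~\ref{Th} immediately gives that $M$ is diffeomorphic to a spherical space-form. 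Thus everything reduces to excluding the immortal, bounded-curvature case when $\pi_1(M)$ is finite.

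For this I would invoke the finite extinction time theorem for closed $3$-manifolds with finite fundamental group (Perelman; Colding--Minicozzi): the Ricci flow with surgery starting from any initial metric on such an $M$ becomes extinct after a finite time. Since the first singular time of the smooth (presurgery) flow is no larger than the extinction time, the smooth flow cannot be immortal; it must develop a singularity at some finite $T_{\max} < +\infty$, at which $\max_M |Rm|$ blows up. This places us in the second alternative above, with $T = T_{\max} < +\infty$, and the conclusion follows from Theorem~\ref{Th}. As a sanity check on the mechanism, note the easy sub-case: in dimension three $R_{\min}(t)$ is non-decreasing along the flow (from $\partial_t R \ge \Delta R + \tfrac{2}{3}R^2$), so if $R_{\min}(t_0) > 0$ for some $t_0$ then $\tfrac{d}{dt}R_{\min} \ge \tfrac{2}{3}R_{\min}^2$ forces blow-up in finite time, again triggering Theorem~\ref{Th}.

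The main obstacle is precisely the exclusion of the immortal bounded-curvature solution, which is where the topological hypothesis enters; I expect this to be the only non-formal step, and I would handle it through finite extinction rather than through the elliptization conjecture, since invoking the latter would render the corollary vacuous and bypass Theorem~\ref{Th} entirely. The point is that $\|R(t)\|_{L^{3/2}}$ is the scale-invariant critical quantity ($\int |R|^{3/2}\,dV$ is unchanged under parabolic rescaling in dimension three), so the bound passes intact to the singular time and feeds directly into Theorem~\ref{Th}; I would only need to verify that the uniform bound ``for $t < +\infty$'' indeed supplies the required bound on the whole existence interval $[0,T_{\max})$. Granting the finite extinction input, the corollary is then immediate.
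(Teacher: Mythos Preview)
Your proposal is correct but takes a genuinely different route in the bounded-curvature case. The paper also splits into the two alternatives you describe, and in the blow-up alternative it applies Theorem~\ref{Th} just as you do. Where you differ is in handling the immortal, uniformly bounded curvature case: the paper does not invoke finite extinction at all, but instead time-shifts the flow, setting $g_i(t)=g(t+i)$ for $T_i=i\to\infty$, and asserts that this sequence converges to a compact $\kappa$-solution, so that $M$ is diffeomorphic to a spherical space-form. Your approach imports the Perelman/Colding--Minicozzi finite extinction theorem to rule this case out entirely, reducing everything to Theorem~\ref{Th}; this makes the role of the finite-$\pi_1$ hypothesis explicit and sidesteps the sequential-limit analysis, at the price of a heavier external input. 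The paper's approach, by contrast, stays within the blow-up/limit framework used elsewhere in the note, though as written it is terse about why the limit is compact and a genuine $\kappa$-solution, and it does not visibly use either the finite-$\pi_1$ hypothesis or the $L^{3/2}$ bound in that branch.
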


Using the idea from our main theorem and Perelman's classifications of 3-dimensional non-compact $\kappa$-solutions, we give another proof of Hamilton's well-known result:

\begin{prop}
\label{Hamilton-Ricci}
A positive Ricci curvature compact 3-manifold must be diffeomorphic to $S^3$ or a quotient of it by a finite group.
\end{prop}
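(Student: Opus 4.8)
The plan is to reprove Hamilton's theorem by running the unnormalized Ricci flow $g(t)$ from the given metric of positive Ricci curvature and reading off the topology of $M$ from the structure of its finite-time singularity, exactly along the lines of Theorem~\ref{Th}. I would first record the three features that make the hypothesis so powerful in dimension three. By Hamilton's maximum principle for the reaction–diffusion system satisfied by the curvature operator, positivity of $\mathrm{Ric}$ is preserved along the flow and the pinching improves, so that $|\mathrm{Ric}-\tfrac{R}{3}g|^2\le C\,R^{2-\delta}$ for some $\delta>0$ on the region where $R$ is large. Since $\tfrac{d}{dt}R_{\min}\ge \tfrac{2}{3}R_{\min}^2$ with $R_{\min}(0)>0$, the minimum scalar curvature blows up at some finite $T<\infty$; hence the flow is singular in finite time and $\max_M|Rm|\to\infty$, so the ``blows up'' hypothesis of Theorem~\ref{Th} is automatic, and Myers' theorem already forces $\pi_1(M)$ to be finite.

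The key new ingredient is the uniform control of the scale-invariant quantity $\int_M R^{3/2}\,dV$ on $[0,T)$. Differentiating along the flow and integrating the Laplacian term by parts, the only term of indefinite sign is the traceless-Ricci term, and one finds
\[
\frac{d}{dt}\int_M R^{3/2}\,dV = -\frac{3}{4}\int_M R^{-1/2}|\nabla R|^2\,dV + 3\int_M R^{1/2}\Big|\mathrm{Ric}-\tfrac{R}{3}g\Big|^2\,dV .
\]
I would then bound the second term using the pinching estimate by $C\int_M R^{5/2-\delta}\,dV$ and close the inequality by interpolating this against the conserved scale of $\int_M R^{3/2}\,dV$ together with a bound on $\|R(t)\|_{L^\infty}$, obtaining $\|R(t)\|_{L^{3/2}}\le C$ for all $t<T$. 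With this bound established, Theorem~\ref{Th} applies directly and already yields that $M$ is a spherical space-form; what the statement asks for beyond this is the self-contained passage through the singularity that underlies Theorem~\ref{Th}.

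Carrying out that passage, I would rescale $g(t)$ by the maximal curvature along a sequence of times approaching $T$. Perelman's no-local-collapsing theorem gives a uniform $\kappa$ on the finite interval $[0,T)$, and Hamilton--Ivey pinching forces any pointed limit to have nonnegative curvature, so the blow-up limit is a complete three-dimensional $\kappa$-solution. Here the two inputs combine: the scale-invariance of $\int R^{3/2}$ with the uniform bound shows that a long neck, on which $R\approx r^{-2}$ over a tube of volume $\approx r^2L$ and which therefore contributes $\approx L/r$ to the $L^{3/2}$-norm, cannot appear; this rules out the non-compact models in Perelman's classification, namely the round cylinder $S^2\times\mathbb{R}$, its $\mathbb{Z}_2$-quotient, and the Bryant soliton. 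The limit is thus compact, and since the improving pinching forces it to be Einstein, hence of constant positive sectional curvature in dimension three, it is a round $S^3/\Gamma$; smooth Cheeger--Gromov convergence then identifies $M$ itself with this quotient, so $M$ is diffeomorphic to $S^3$ or a finite quotient of it.

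The step I expect to be the main obstacle is the uniform $L^{3/2}$ estimate on $[0,T)$. The sign-indefinite term in the evolution above is controlled only through the pinching, and closing the estimate requires quantifying how fast the pinching improves against the growth of $R$, rather than merely invoking that $|\mathrm{Ric}-\tfrac{R}{3}g|/R\to 0$; in particular one needs a Type-I-type or interpolation control of $\int_M R^{5/2-\delta}\,dV$ by $\int_M R^{3/2}\,dV$ to run Gronwall. Once that bound is secured, the blow-up step is routine given Perelman's structure theory and the classification of non-compact $\kappa$-solutions.
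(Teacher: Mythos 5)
Your overall architecture matches the paper's: show the positive-Ricci pinching is preserved, establish a uniform bound on $\|R(t)\|_{L^{3/2}}$ up to the singular time, and then run the blow-up/$\kappa$-solution analysis of Theorem~\ref{Th} to conclude the limit is compact and, by Hamilton's estimate $m_3-m_1\le K(m_1+m_2+m_3)^{1-\delta}$, round. The blow-up endgame and the neck-exclusion heuristic (a neck of length $L$ and radius $r$ contributes $\approx L/r$ to $\int R^{3/2}$) are exactly in the spirit of the paper. The problem is the middle step, and you have correctly diagnosed it yourself without resolving it: your proposed proof of the uniform $L^{3/2}$ bound differentiates $\int_M R^{3/2}\,dV$, bounds the traceless-Ricci term by $C\int_M R^{5/2-\delta}\,dV$ via pinching, and then needs $\int_M R^{5/2-\delta}\,dV \le \|R\|_{L^\infty}^{1-\delta}\int_M R^{3/2}\,dV$ with $\int_0^T\|R(t)\|_{L^\infty}^{1-\delta}\,dt<\infty$ to close Gronwall. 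That time-integrability is precisely a Type-I bound $\|R(t)\|_{L^\infty}\le C/(T-t)$, which is not available a priori; for positive Ricci curvature it is essentially equivalent to knowing the flow becomes round, i.e.\ to the theorem being proved. As written, the Gronwall argument is circular or at best incomplete, so the crucial estimate is not established.

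The paper avoids this entirely by proving the $L^{3/2}$ bound by contradiction rather than by a differential inequality. If $\|R\|_{L^{3/2}}(T_i)\to\infty$, rescale by the maximal curvature $Q_i$ up to time $t_i\le T_i$; since $|Rm|\le 1$ after rescaling, $R\le 6$, so scale-invariance of $\int R^{3/2}$ forces $\mathrm{Vol}(M,g_i(Q_i(T_i-t_i)))\ge \bigl(\|R\|_{L^{3/2}}/6\bigr)^{3/2}\to\infty$; positivity of $R$ makes volume nonincreasing, so $\mathrm{Vol}(M,g_i(0))\to\infty$ as well, forcing the pointed limit to be a \emph{noncompact} $\kappa$-solution. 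Any such solution contains $\epsilon$-necks (or is a cylinder or its quotient), on which the scale-invariant, flow-preserved pinching $(m_1+m_2)/m_3\ge C_2>0$ fails for $\epsilon$ small --- a contradiction. If you want to salvage your write-up, replace your Gronwall step with this volume-comparison argument (or independently establish Type-I, which you should not expect to do cheaply); the rest of your outline then goes through.
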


{\bf Acknowledgments:} The author would like to express his gratitude to X. Chen who brought this problem to his attention and provided many helpful and stimulating discussions. He is very grateful of V. Apostolov's detailed suggestions for this paper. He also would like to thank H. Li for discussing and reviewing the paper and R. Haslhofer for useful comments.

\section{Preliminaries}

Let $M$ be a closed 3-dimensional manifold, i.e, a compact manifold without boundary and $g_0$ be a smooth metric on $M$. We want to study the behavior of the metrics obtained by the so-called unnormalized Ricci flow equation: $$\frac{\partial g(t)}{\partial t} = -2 \mbox{Ric}(g(t)), \quad g(0)=g_0.$$ 

The short time existence of Ricci flow is established by R. Hamilton\ \cite{H3}, see also D. DeTurck\ \cite{De}. One important phenomenon of the flow is that it may form the singularities, i.e, there is a finite time $T$ such that the solution of Ricci flow equation cannot extend over time $T$. R. Hamilton\ \cite{HS} proved that the solution cannot extend over time $T$ precisely when $$\lim_{t \rightarrow T}\max_{x \in M}|Rm(x,T)| = \infty,$$ where $Rm(x, t)$ denotes the Riemannian curvature tensor of $g(t)$ at $x$. In this situation we say that the curvature blows up at $T$.

There are many examples to illustrate appearance of blow-ups. For instance, considering the evolution equation of the scalar curvature computed by R. Hamilton\ \cite{H3} $$\frac{\partial R}{\partial t} = \Delta R + 2 |Ric|^2,$$  we have the maximal principle and the Cauchy-Schwarz inequality, $$\frac{d}{d t}(R_{min}(t)) \geq \frac{2}{n}R_{min}^2(t) $$ in the sense of forward difference quotients. If at initial time $(M, g_0)$ is a manifold with positive scalar curvature, then the scalar curvature will blow up at finite time. Hence the curvature operator would also blow up. 

\section{Proof of Theorem (\ref{Th})}

\begin{proof}
Suppose the Ricci flow blows up at time $T \leq \infty$. Let us pick a sequence of points $(p_i, t_i)$ such that $t_i \rightarrow T$ as $i \rightarrow \infty$ and $|Rm(p_i,t_i)|=\max_{x \in M,t \leq t_i}{|Rm(x, t)|}$. Let $Q_i=|Rm(p_i,t_i)|, g_i(t)=Q_i g(Q_i^{-1}t+t_i)$. Using Perelman's celebrated $\kappa$-non-collapsing result\ \cite{P1}, the sequence $(M, g_i(0), p_i)$ converges to a $\kappa$-solution. Perelman\ \cite{P2} also classified all $\kappa$-solutions.

\begin{thm}{(Classification of $\kappa$-solution)}.

There is  $\bar{\epsilon}> 0$ such that the following is true for any $0 < \epsilon < \bar{\epsilon}$. There is $C = C(\epsilon)$ such that for any $\kappa > 0$ and any $\kappa$-solution $(M, g(t))$ one of the following holds.

\begin{enumerate}
\item $(M, g(0))$ is compact. In this case $M$ is diffeomorphic to a 3-dimensional spherical space-form.

\item $(M, g(0))$ is a noncompact manifold of positive curvature. All points of $(M, g(0))$ are either contained in the core of a $(C, \epsilon)$-cap or are the centers of a $\epsilon$-neck in $(M, g(0))$.

\item $(M, g(0))$ is isometric to the quotient of the product of a round $\mathbb{S}^2$ and $\mathbb{R}$ by a free, orientation-preserving involution.

\item $(M, g(0))$ is isometric to the product of a round $\mathbb{S}^2$ and $\mathbb{R}$.

\item $(M, g(0))$ is isometric to $\mathbb{RP}^2 \times \mathbb{R}$, where the metric on $\mathbb{RP}^2$ is of constant Gaussian curvature.

\end{enumerate}
\end{thm}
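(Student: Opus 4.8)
The plan is to reproduce Perelman's argument from \cite{P2}, whose backbone is the construction of an asymptotic shrinking soliton together with a repeated use of the strong maximum principle for the curvature operator. Recall that a $\kappa$-solution is a nonflat ancient solution, defined on $(-\infty,0]$, that is $\kappa$-noncollapsed on all scales and has bounded, nonnegative curvature operator. My first move would be to dispose of the degenerate cases via Hamilton's strong maximum principle: under the Ricci flow the curvature operator evolves by a reaction--diffusion equation whose structure forces that if $\mathrm{Rm}\ge 0$ has a nontrivial kernel at one point, it has a kernel of the same rank everywhere, and the solution locally splits off a flat factor. In dimension three this means that either the solution has strictly positive curvature operator, or it is (locally) a product $N^2\times\mathbb{R}$ with $N^2$ a nonflat surface carrying an ancient, $\kappa$-noncollapsed flow. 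The $2$-dimensional classification then forces $N^2$ to be a round $\mathbb{S}^2$ or $\mathbb{RP}^2$, and sorting the possible free isometric involutions of $\mathbb{S}^2\times\mathbb{R}$ (the orientation-preserving ones being the only ones compatible with the flow structure) yields precisely cases (3), (4) and (5).

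It remains to treat a $\kappa$-solution with strictly positive curvature operator. Here the central construction is the asymptotic shrinking soliton. I would introduce Perelman's reduced length $\ell$ and reduced volume $\tilde V$ based at a point of the final time slice, and exploit the monotonicity of $\tilde V$ under the parabolic rescalings $g_\lambda(t)=\lambda\, g(\lambda^{-1}t)$ as $\lambda\to 0$. Combining this monotonicity with $\kappa$-noncollapsing (to prevent the limit from degenerating) and Hamilton's compactness theorem for sequences of flows with bounded geometry, one extracts a nontrivial limit that is a nonflat gradient shrinking Ricci soliton, again $\kappa$-noncollapsed and of bounded nonnegative curvature. This blow-down limit is the asymptotic soliton of the $\kappa$-solution and controls its large-scale geometry.

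The next step is to classify such $3$-dimensional shrinking gradient solitons. Using the soliton equation together with the nonnegativity of the curvature operator and the same splitting principle as above, the only possibilities are the round shrinking $\mathbb{S}^3$ (and its quotients $\mathbb{S}^3/\Gamma$) and the round shrinking cylinder $\mathbb{S}^2\times\mathbb{R}$ (and its $\mathbb{Z}_2$-quotient). If the asymptotic soliton is compact, then the $\kappa$-solution itself is compact; appealing to Hamilton's theorem that the normalized flow on a compact $3$-manifold with positive curvature converges to a constant-curvature metric, $M$ is diffeomorphic to a spherical space-form, which is case (1). If instead the asymptotic soliton is a cylinder, the solution is noncompact and we are in the setting of case (2).

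For the noncompact, strictly positively curved case, the goal is the neck--cap decomposition. The fact that all sufficiently deep blow-downs converge to a round cylinder is leveraged by a compactness-and-contradiction argument to show that every point lying outside a fixed compact set --- equivalently, every point of sufficiently large curvature-scaled distance from a basepoint --- is the center of an $\epsilon$-neck. The complementary set of non-neck points is compact; since the manifold has positive (hence nonnegative) curvature and is noncompact, the soul theorem gives that its soul is a point and $M\cong\mathbb{R}^3$, and the core of the non-neck region is organized into a $(C,\epsilon)$-cap. The uniformity of the constant $C=C(\epsilon)$, independent of $\kappa$ and of the particular solution, is obtained from the global compactness of the space of (suitably normalized, pointed) $3$-dimensional $\kappa$-solutions, which is itself proved using the reduced-volume and Harnack estimates above. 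I expect this last package --- the construction of the asymptotic soliton through the $\mathcal{L}$-geometry and the universal compactness yielding a uniform $C(\epsilon)$ --- to be the main obstacle, as it is the genuine technical heart of \cite{P2} and cannot be shortcut using only the elementary evolution equations recalled in the Preliminaries.
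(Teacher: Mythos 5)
The paper does not actually prove this theorem: its ``proof'' consists of the single line ``See e.g.\ Morgan--Tian \cite{MT}, Theorem (9.93)'' together with pointers to \cite{BBBBMP}, \cite{CZ}, \cite{KL}, and the classification is then used as a black box in the proof of the main theorem. Your outline is, in substance, a faithful sketch of the argument those references carry out: the strong maximum principle to split off the cylindrical cases (3)--(5), the asymptotic shrinking soliton built from $\ell$ and the reduced volume, the classification of three-dimensional $\kappa$-noncollapsed shrinking solitons, Hamilton's convergence theorem in the compact positively curved case, and the compactness of the space of pointed $\kappa$-solutions to extract the uniform constant $C(\epsilon)$ and the neck--cap structure. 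You also correctly flag that the $\mathcal{L}$-geometry and compactness package is the real technical content and cannot be reconstructed from the elementary evolution equations in this paper's preliminaries --- which is precisely why the paper cites rather than proves. One slip worth fixing: your parenthetical assertion that only orientation-preserving free involutions of $\mathbb{S}^2\times\mathbb{R}$ are compatible with the flow contradicts your own (correct) list of outcomes. Case (5), $\mathbb{RP}^2\times\mathbb{R}$, is the quotient by the orientation-\emph{reversing} involution (antipodal map on $\mathbb{S}^2$ times the identity on $\mathbb{R}$), while the orientation-preserving involution (antipodal map times $t\mapsto -t$) produces the twisted line bundle over $\mathbb{RP}^2$ of case (3); both quotients are genuine $\kappa$-solutions and both must appear in the classification.
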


\begin{proof}
See e.g, Morgan-Tian\ \cite{MT}, Theorem (9.93). Also \ \cite{BBBBMP} \cite{CZ} \cite{KL}.
\end{proof}

Now let us complete the proof of our main theorem. Notice that $||R||_{L^{\frac{3}{2}}}$ is scaling invariant. If it is uniformly bounded for $t < T$, then the limited generalized manifold we have obtained cannot belong to the last 3 cases of the above theorem. That is because the $||R(0)||_{L^{\frac{3}{2}}}$ of the last 3 cases are all infinity. The limited generalized manifold cannot belong to the second case. The reason is that the second case contains infinite many disjoint $\epsilon$-necks whose $||R||_{L^{\frac{3}{2}}}$ is bounded below. So the limited manifold must be compact and diffeomorphic to our original manifold $M$.
\end{proof}

Next we give the proof of Corollary (\ref{P}).
\begin{proof}
There are two cases we need to consider:
\begin{itemize}
\item The Riemannian curvature blow up at $T \leq \infty$.

\item The Riemannian curvature are uniformly bounded for $t < \infty$.
\end{itemize}

For the first case, the conclusion of our corollary is obvious from the main theorem. For the second case, by shifting the initial time of the Ricci flow from $0$ to $T_i = i$, we obtain a sequence of Ricci flow $g_i(t)$. Since the Riemannian curvature are uniformly bounded, $g_i(t)$ converge to a compact $\kappa$-solution. Hence our manifold is diffeomorphic to a 3-dimensional spherical space-form.
\end{proof}

\section{Proof of Proposition (\ref{Hamilton-Ricci})}
\begin{proof}
For a compact 3-manifold $M$ with initial metric $g_0$ of positive Ricci curvature, we claim that the $||R||_{L^{\frac{3}{2}}}$ is bounded up to any singular time $T$. We will use the maximum principle for tensors established by R. Hamilton\ \cite{H4} to show this. Assume that $$m_1(x,t) \leq m_2(x,t) \leq m_3(x,t)$$ are three eigenvalues of the curvature operator $Rm$. Then the assumption of positive Ricci curvature corresponds to $$m_1(x,0) + m_2(x,0) \geq C_1.$$ Hence $m_3(x,0) > 0$ and there is a constant $C_2$ such that $$(m_1(x,0) + m_2(x,0)) / m_3(x,0) \geq C_2 $$ for all $x \in M$. Now let $Z$ be the subset of $Sym^2(\wedge^2 T_x^* M)$ such that for any element in $Z$, the sum of its two smallest eigenvalues satisfies $$m_1(x) + m_2(x) \geq C_1 $$ and $$ (m_1(x) + m_2(x)) / m_3(x) \geq C_2 $$ for all $x \in M$. 

It is easy to see that $Z$ is a closed subset and is invariant under parallel translation by the connection on $Sym^2(\wedge^2 T_x^* M)$ induced by the Levi-Civita connection on the tangent bundle of $TM$. Also $Z$ is a convex set. This is because if we assume $\mathcal{T}_0, \mathcal{T}_1 \in Z$, then for all $0 < t < 1$, 
\begin{eqnarray*}
& &m_{1, t \mathcal{T}_0 + (1-t) \mathcal{T}_1}(x) + m_{2, t \mathcal{T}_0 + (1-t) \mathcal{T}_1}(x) \\
&\geq& t (m_{1,\mathcal{T}_0}(x) + m_{2,\mathcal{T}_0}(x)) + (1 - t) (m_{1,\mathcal{T}_1}(x) + m_{2,\mathcal{T}_1}(x))
\end{eqnarray*}
and $$m_{3, t \mathcal{T}_0 + (1-t) \mathcal{T}_1} \leq t m_{3,\mathcal{T}_0}(x) + (1 - t) m_{3,\mathcal{T}_1}(x).$$ So $t \mathcal{T}_0 + (1-t) \mathcal{T}_1 \in Z$. The evolution formulas for the eigenvalues are
\begin{eqnarray*}
\frac{d}{dt}(m_1 + m_2) & = & m_1^2 + m_2^2 + m_3 (m_1 + m_2) \geq 0 \\
\frac{d}{dt} (m_3) & = & m_3^2 + m_1 m_2 > m_3^2 - m_2^2 \geq 0 \\
\frac{d}{dt} (\frac{m_1 + m_2}{m_3}) & = & \frac{m_1^2(m_3 - m_2) + m_2^2(m_3 - m_1)}{m_3^2} \geq 0
\end{eqnarray*}
Now we have $$m_1(x,t) + m_2(x,t) \geq C_1 $$ and $$(m_1(x,t) + m_2(x,t)) / m_3(x,t) \geq C_2 $$ up to the singular time $T$. If the energy is not bounded as $t \rightarrow T$, i.e, there is a sequence of time $T_i \rightarrow T$ such that $$\lim_{T_i \rightarrow T}||R||_{L^{\frac{3}{2}}}(T_i) = \infty.$$ Let us pick a sequence of point $(p_i, t_i)$ such that $t_i \leq T_i$ and $$|Rm(p_i,t_i)|=\max_{x \in M,t \leq t_i}{|Rm(x, t)|} \geq \max_{x \in M} {|Rm(x, T_i)|}.$$ Let $Q_i=|Rm(p_i,t_i)|, g_i(t)=Q_i g(Q_i^{-1}t+t_i)$. Notice that the volume of $(M, g_i(Q_i(T_i-t_i)))$ is greater than its $\left(||R||_{L^{\frac{3}{2}}}/6\right)^{\frac{3}{2}}$ which tends to infinity.  Since the scalar curvature is positive, we have 
$$
\mathrm{Vol} (M, g_i(0)) > \mathrm{Vol} (M, g_i(Q_i(T_i-t_i))).
$$
Hence the volume of $(M, g_i(0))$ approaches to infinity as $i \rightarrow \infty$. So this sequence of metrics must converge to a non-compact $\kappa$-solution. Either the universal covering of  this $\kappa$-solution is $S^2 \times \mathbb{R}$ or the $\kappa$-solution contains an $\epsilon$-neck. However, we can pick $\epsilon$ to be small enough to violate the property $$(m_1(x) + m_2(x)) / m_3(x) \geq C_2 .$$

So the energy must be bounded along the Ricci flow. We conclude that the limit $\kappa$-solution is a compact one. Using his maximum principle for tensors, R. Hamilton shows that the property $$m_3 - m_1 \leq K(m_1 + m_2 + m_3)^{1-\delta}$$ is preserved under the Ricci flow. Hence the compact $\kappa$-solution has positive constant sectional curvature at each point. By Schur's theorem, $M$ is diffeomorphic to $S^3$ or its quotient.
\end{proof}

\vskip3mm

Hongnian Huang

CIRGET (Inter-University Research Center for Geometry and Topology)

University of Quebec at Montreal, 

Montreal, Quebec, Canada.

hnhuang@gmail.com
\end{document}